\newtheorem{notation}[subsection]{Notation}
\newtheorem{remark}[subsection]{Remark}
\newtheorem{definition}[subsection]{Definition}
\newtheorem{proposition}[subsection]{Proposition}
\newtheorem{lemma}[subsection]{Lemma}
\newtheorem{theorem}[subsection]{Theorem}
\newtheorem{assumption}[subsection]{Assumption}
\tikzset{lbl/.style={font=\tiny,fill=white, inner sep=1pt, draw=gray, ultra thin}}
\tikzset{nd/.style={}}
\let\cate\mathsf
\def\Opt{\mathsf{Optic}}
\DeclareDocumentCommand{\esc}{O{magenta} O{blue} m m m O{M}}{
  \left(
	  \begin{tikzpicture}[baseline=(current bounding box.center)]
	    \arTwo[#1!10]{{#4}_1}\gau{$#3$}\down{\dro{$#5$}}\up{\dro{$#6$}}
	  \end{tikzpicture}\, , \,
	  \begin{tikzpicture}[baseline=(current bounding box.center)]
	    \twoAr[#2!10]{{#4}_2}\dro{$#5$}\down{\gau{$#3$}}\up{\gau{$#6$}}
	  \end{tikzpicture}
  \right)
}
\def\A{\texttt{Aki}\xspace}
\def\B{\texttt{Bogdan}\xspace}
\def\C{\texttt{Candice}\xspace}
\def\aki{\includegraphics[scale=.1]{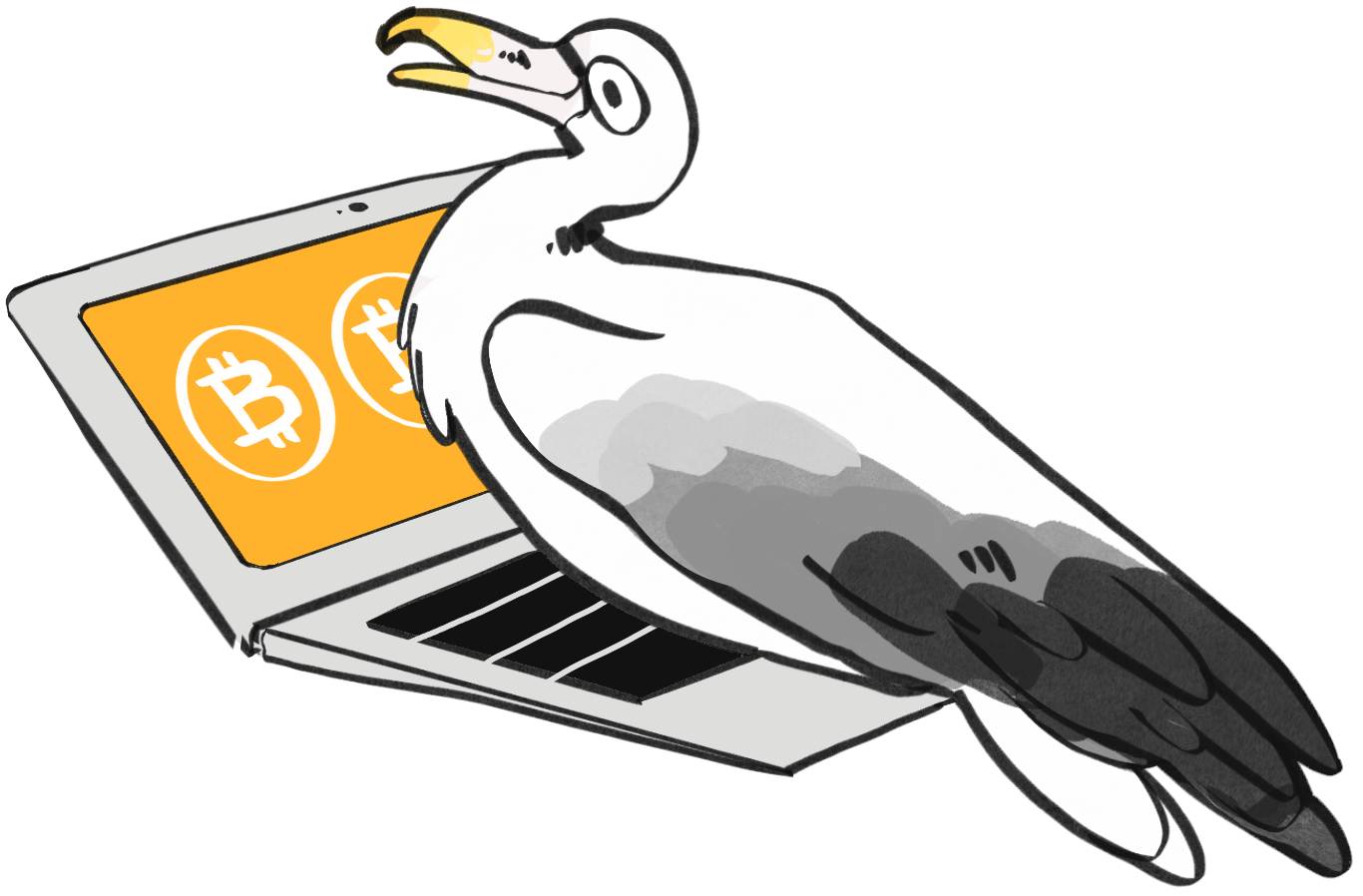}}
\def\bogdan{\includegraphics[scale=.2]{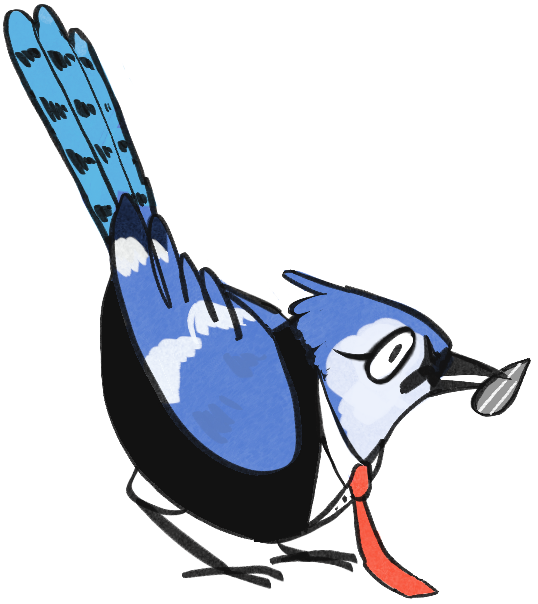}}
\def\candice{\includegraphics[scale=.1]{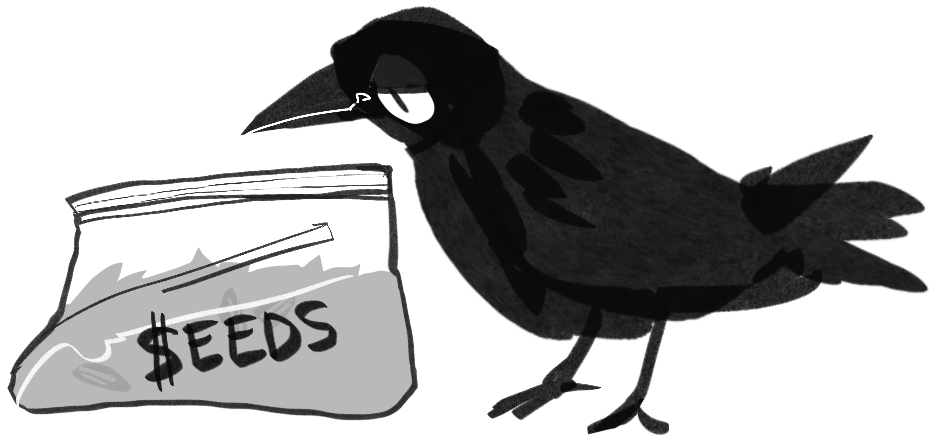}}
\def\vault{\includegraphics[scale=.1]{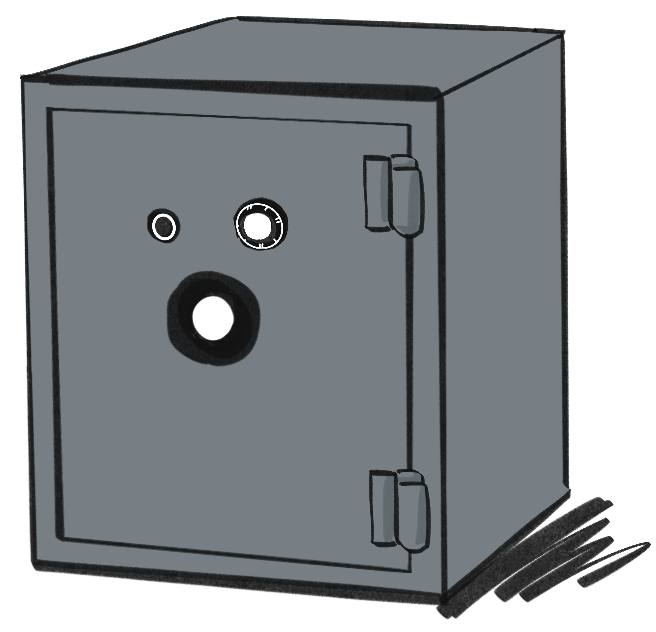}}
\def\lr#1#2{\langle #1,#2\rangle}
\title{Escrows are optics}
\titlerunning{Escrows are optics}
\thanks{\href{https://gitcoin.co/grants/1086/independent-ethvestigator-program}{Independent Ethvestigator Program}}}
\thanks{ESF funded Estonian IT Academy research measure (project 2014-2020.4.05.19-0001)}}
\authorrunning{Genovese, Loregian and Palombi}
\begin{document}
\maketitle

\begin{abstract}
  We provide a categorical interpretation for \emph{escrows}, i.e. trading protocols in trustless environment, where the exchange between two agents is mediated by a third party where the buyer locks the money until they receive the goods they want from the seller.

	A simplified escrow system can be modeled as a certain kind of morphism in the category of \emph{optics} on a monoidal category. When objects in the base category have monoid and comonoid structures, more involved kinds of escrows `with intermediaries' can be modelled as morphisms with action-like properties.
\end{abstract}

\section{Introduction}
\setlength{\epigraphwidth}{0.55\textwidth}
\epigraph{%
You have to be trusted by the people that you lie to\\
So that when they turn their backs on you,\\
You'll get the chance to put the knife in.}{Waters \& Gilmour}
In their most basic form, escrows can be described as a way to enforce mutual destruction in a trustless transactional environment.

Let us unpack this definition. In our work, a \emph{transaction} will mean a transfer of goods, whereas a \emph{trade} will be a mutual exchange of goods defined by transactions. For example, the transactions `\A pays money to \B' and `\B gives apples to \A' will define a (familiar) trade where \A buys some apples from \B.\footnote{Our paper employs other characters than Alice, Bob and Charlie: \emph{Aki} the albatross, \emph{Bogdan} the bluejay, and \emph{Candice} the crow.}

The structure of a simple trade is depicted below, with the downward-pointing arrows representing the flow of time.
\begin{figure}[ht]
  \begin{center}
    \begin{tikzpicture}[>=stealth']
      \draw[->] (0,0) -- (0,-4);
      \node[nd] (A) at (0,-1) {\aki};
      \draw[->] (3,0) -- (3,-4);
      \begin{scope}[yshift=-1cm]
      \node[nd] (B) at (3,-1.25) {\candice};
      \node[nd] (A') at (0,-2) {\aki};
      \draw[->, thick, gray] (A) -- node[font=\tiny,fill=white, inner sep=1pt, draw=gray, ultra thin] {send money} (B);
      \draw [->, thick, gray] (B) -- node[font=\tiny,fill=white, inner sep=1pt, draw=gray, ultra thin] {send goods} (A');
      \end{scope}
    \end{tikzpicture}
  \end{center}
  \caption{A simple trade.}\label{fig: simple trade}
\end{figure}
Now, an environment for a transaction is `standard' if trades happen in-person and in a situation where at least some basic form of law enforcement exists. In a legally standard environment, all parties in a trade can enforce exchange on the spot: if \A pays \B, but \B refuses to give \A apples, \A can go to the police, refuse to leave the shop until a refund or the goods are received, or hit \B on the head, depending on the jurisdiction in which they are. The same argument holds for \B, which allows keeping the basic structure of trades quite simple in most situations. In an economy of scale, \A and \B can transact over long distances by relying on a network of mutual bonds of trusts defining a banking system.

Things change radically in a trustless environment, where only part of the trade can be enforced. The most typical example is internet commerce, and in particular blockchains, for the following reasons:
\begin{itemize}
  \item In some cases, identities are pseudonymous or fully anonymous. Even when they are not, the parties taking part in a trade often live in different jurisdictions, making the application of law enforcement difficult or impossible.
  \item In some cases, the nature of the goods being traded may make it difficult or outright impossible to enforce a trade by legal means. The obvious example is people buying illegal goods (drugs or other prohibited chemicals) or services on marketplaces such as SilkRoad \cite{wiki:Silk_Road}. Here, even in a quarrel, no parties will ask the police for help lest they be incarcerated for far bigger crimes.
  \item In some cases, only part of the trade can be enforced and accounted for. This problem is particularly evident in a blockchain setting: The transaction `\A gives some bitcoins to \B' is publicly verifiable on the Bitcoin blockchain, and no dispute can arise in this respect. On the contrary, there is no easy way to account for `\B shipped apples to \A': we will have to rely on \B's word.
\end{itemize}
In such a setting, since we cannot suppose in any way that \A and \B know or trust each other, we see that a massive problem arises with trades as we usually conceive them: If \A pays \B, \B can run away without ever shipping the goods. On the contrary, if \B ships the goods first, \A can receive them and never pay \B back. More importantly, in a trustless paying system such as Bitcoin, there is no way for \A to claim money back in case of scams.

Escrows are a way to fix this problem. In their most basic form, they work as follows:
\begin{itemize}
  \item To buy some goods from \B, \A puts a sum of money totalling the price of the goods bought in an \emph{escrow system}. This method consists of a vault where money can go in but not out. \A, and only \A, has the power to release the money at a later stage. When doing so, the money is automatically sent to \B;
  \item \B can publicly check that the money is in the escrow vault. \B can be sure that the only way the money can leave the escrow is by being sent to him.
  \item \B ships the goods to \A;
  \item Upon receiving them, \A can release her escrow so that \B is paid.
\end{itemize}
This system is far from perfect, and various refinements of it have been proposed.

Still, even in this basic form, we can appreciate how escrows turn the strategy `running away with the money/goods' into a non-rational choice. In refusing to ship the goods, \B doesn't gain any money because \A will never release the escrow. Similarly, in refusing to pay \B, \A will never get her money back. So, both \A and \B have no economic incentive in not upholding their side of the trade and may do so only if motivated by irrational motives such as pure malicious intent.\footnote{Whoever is familiar with the blockchain ecosystem --and with life over the Internet-- knows that this is still a big problem. In practice, the introduction of time windows that release the escrow to the original owner if certain conditions are met can mitigate the problem. For the sake of simplicity, we will not focus on this in this work.} The basic structure of an escrow is depicted in \autoref{fig: escrow trade}.
\begin{figure}
  \begin{center}
    \begin{tikzpicture}[yscale=1.5,>=stealth']
      \draw[->] (0,0) -- (0,-3);
      \node[nd] (A) at (0,-.5) {\aki};
      \node[nd] (A') at (0,-2.25) {\aki};
      \draw[->] (3,0) -- (3,-3);
      \node[nd] (E) at (3,-.75) {\vault};
      \node[nd] (E') at (3,-1.5) {\vault};
      \node[nd] (E'') at (3,-2.5) {\vault};
      \draw[->] (6,0) -- (6,-3);
      \node[nd] (B) at (6,-.5) {\candice};
      \node[nd] (B') at (6,-1.5) {\candice};
      \node[nd] (B'') at (6,-2.5) {\candice};
      \draw[->, thick, gray]
      (A) -- node[above,lbl] {lock money} (E);
      \draw[->, thick, gray] (B) -- node[lbl] {money locked?} (E');
      \draw[->, thick, gray] (E')    -- node[lbl] {Yes} (B');
      \draw[->, thick, gray] (B')    -- node[below, lbl] {send goods} (A');
      \draw[->, thick, gray] (A')    -- node[lbl] {unlock} (E'');
      \draw[->, thick, gray] (E'')    -- node[lbl] {send money} (B'');
    \end{tikzpicture}
  \end{center}
  \caption{An escrow trade.}\label{fig: escrow trade}
\end{figure}
This paper aims to describe escrows formally, with a particular focus on their compositional nature. For this, the language of category theory is fundamental. Our main contribution is showing that escrows are examples of an \emph{optic} and that thus they organise in a category whose composition law accounts for a tangible property of this kind of transactions.

In detail, in~\cref{sec: escrows are optics} we define a simplified escrow system as a certain kind of optic in a monoidal category $\clM$ (e.g., the category of sets with cartesian product); escrows can be regarded as morphisms of a category $\clE(\clM)$, with objects are the same of $\clM$, and where the hom-objects are $\lr X Y = \Opt_\clM(\cop Y X, \cop X Y)$. When $X$ is a comonoid and $Y$ is a monoid in $\clM$, $\clE(\clM)(X,Y)$ is a monoid in $\Set$, acting on the set of optics $\cop B B \to \cop XY$. Moreover, in~\cref{sec:usingAgentsAsIntermediaries} we will define a map
	\[\notag\xymatrix@R=0cm{\lhd : \lr YX \times  \Opt(\cop YX, \cop BB) \ar[r] & \Opt(\cop YX, \cop{X\otimes B}{Y\otimes B})}\]
having action-like properties. This has the following interpretation: the object $B$ acts as an intermediary in a transaction between $X$ and $Y$, modeled by an escrow in $\lr YX$.

\subsection{Related work on optics, and conventions}
Optics originated in functional programming as a compositional solution to the problem of accessing fields of nested data structures; currently, optics find application in diverse branches of applied category theory like categorical probability theory \cite{daf90452d38580f2e19794e616379995861ecea5,da2b5f4c85fe5c1aff024f6150c2e83575ac1c76} the theory of open games \cite{f9b982a8d01989ee54b4ab452d582be659fd560a,475daed80393f9c3a97e61081ebcc31c17977cf4} (see also the connection with lenses, particular optics: \cite{ecd7fdf53c994685194f3cb907eb68b2418110bd,e26e834e9f21cc2c8c7a598c0bbf25b3271c7637}; see \cite{3a47aaca77631f0db4e035239ece5d1829e7d511} for a categorical overview and generalisation) functional programming \cite{38c6ae5831d38d33e766ca236064fd075f8d49c0,pickering2017profunctor,10aecab386fc380275c87fce2372894bf771d748, noi}; see \cite{50efd1c56c11adca8b171aebcc0a41c02c8136d9} for a string-diagrammatic calculus (which is not the one we employ here; instead, we use \cite{276b25d17339ace4121eff16e569bddceebd70e6}).

The blanket assumption that we make throughout the entire paper is to work in a fixed symmetric monoidal category $\clM$. We will use the string diagram notation for monoidal categories whenever convenient. We freely employ basic facts about monoidal categories \cite{kelly}, the notion of actions of an internal monoid in a monoidal category, and elementary facts about `coend calculus' \cite{coend-calcu}, namely the fact that given a category $\clX$ and a functor $T : \clX^\op\times \clX \to \Set$ its `coend' $\int^X T(X,X)$ is the coequaliser \cite[2.4]{Bor1}
\[
  \xymatrix{
    \coprod_{Y\to X} T(X,Y) \ar@<.33em>[r]\ar@<-.33em>[r] & \coprod_{X\in \clX} T(X,X) \ar[r]  & \int^X T(X,X)
  }
\]
of a suitable pair of maps defined by the conjoint action of $T$ on morphisms (see \cite[(1.29)]{coend-calcu}).

Although indirectly, we work with the notion of bicategory of profunctors; the bicategory $\cate{Prof}$ has objects the small categories $\clA,\clB$, 1-cells from $\clA$ to $\clB$, denoted $\clA \pto \clB$, are the functors $P : \clA^\op\times\clB \to \Set$, and 2-cells between $P,Q : \clA\pto \clB$ are the morphisms in the category $\Cat(\clA^\op\times\clB,\Set)$. The 1-cells of $\cate{Prof}$ can be composed with a `matrix product' rule
\[
Q\odot P : \clA \pto \clC : (A,C)\mapsto \int^B P(A,B)\times Q(B,C)
\] if $\clP : \clA\pto\clB$ and $Q : \clB\pto \clC$.
\section{Escrows are optics}\label{sec: escrows are optics}

In the following, we interpret objects $A$ (\A), $B$ (\B), $\dots$ of $\clM$ as entities of some kind, transacting in money and goods. As already discussed, we assume the interaction between $A,B\in\clM$ is always non-spiteful, in the sense that we dismiss the possibility that $A$ (resp., $B$) wants to cause damage to $B$ (resp., $A$) even at the cost of losing money (resp., goods).

Morphisms $A \to B$ will be interpreted from time to time as exchanges of goods, money or information between $A$ and $B$. The nature of goods and information being transacted will often be kept implicit: $A \to B$ means that some information or good possessed by $A$ is given to $B$.

On one side, money and goods are \emph{resources}, and as such, they can be compounded (`if $A$ gives me an apple and $B$ gives me a pear, I have an apple and a pear'). On the other hand, information is not a resource and can be copied ad libitum: I can tell everyone that I have an apple as long as I want.

To aid intuition, we highlight the difference between exchanges of resources and exchanges of information by depicting the former in green and the latter in red.

Countless categorical structures have been proposed to achieve a complete formalisation of this difference. Here, we will try to get away with as few assumptions as we can.

In this setting, a simple trade like the one in~\autoref{fig: simple trade} can be represented as a couple of morphisms
\begin{equation*}
  \begin{tikzpicture}
    \xScale[2]{\gau{$A$}\mor[green!10, minimum height=12, minimum width=12]{$\faBtc$}\dro{$B$}}
  \end{tikzpicture}
  \hspace{8em}
  \begin{tikzpicture}
    \xScale[2]{\gau{$B$}\mor[green!10, minimum height=12, minimum width=12]{$\faEnvelope$}\dro{$A$}}
  \end{tikzpicture}
\end{equation*}
oriented from left to right.

A morphism $A \to B$ represents a money transfer from $A$ to $B$ (in this case, Bitcoins). Similarly, a morphism $B\to A$ represents the transfer of goods from $B$ to $A$.

Composing these morphisms expresses the idea of $A$ turning her money into goods and $B$ turning his goods into money.
An escrow, as we designed it, is given by:
\begin{equation}\label{eq: escrow}
  \begin{tikzpicture}
    \yScale[2]{\gau{$A$}\arTwo[green!10]{$\faLock$}}
    \down[2]{
      \yScale{
        \step{
          \node[below, font=\tiny] at (0,2*\nlen) {$B$};
          \mor[red!10, minimum height=12, minimum width=12]{$\faEnvelope$}
          \step{
            \node[below, font=\tiny] at (0,2*\nlen) {$A$};
          }
          \up[2]{
            \idn
            \step[0.5]{\node[below, font=\tiny] at (0,2*\nlen) {$E$};}
          }
        }
      }
    }
    \step[2]{
      \yScale[2]{
        \twoAr[green!10]{$\faUnlock$}\dro{$B$}
      }
    }
  \end{tikzpicture}
\end{equation}

This is interpreted as follows:
\begin{itemize}
  \item the morphism \texttt{Lock} takes a resource owned by $A$ and transfers it to a trusted escrow $E$. This can be a third trusted party or a digital vault such as a smart contract. It also sends to $B$ a notification that the resource has been deposited, so that shipment can begin.
  \item The morphism \texttt{Unlock} transfers the resource from $E$ to $B$ upon confirmation from $A$.
  \item The morphsim \texttt{Envelope} represents the physical shipment and reception of goods from $B$ to $A$.
\end{itemize}

An important remark on the graphical representation of optics is now in order:
\begin{remark}\label{our_naive_optics}
  Our treatment of optics is based on a less sophisticated idea than the one used in functional programming, i.e. the intuition that an optic is a device that can be pictorially represented as a block
  \[
    \begin{tikzpicture}
      \fill[lightgray] (0,0) rectangle (2,1);
      \fill[white] (.25,-.1) rectangle (1.75,.5);
      \node[left] at (0,.5) {\tiny $S$};
      \node[right] at (2,.5) {\tiny $T$};
      \draw[wire] (.25,.25) -- (.5,.25) node[right, font=\tiny] {$A$};
      \draw[wire, xshift=1.25cm] (.25,.25) -- (.5,.25) node[xshift=-.25cm, left, font=\tiny] {$B$};
    \end{tikzpicture}
  \]
  `waiting for' a morphism $f : A\to B$ to produce a morphism $S \to T$.
\end{remark}
Notice how the morphism $B \to A$ above is depicted in red. It represents the \emph{information} witnessing that an exchange of goods has happened.

This is compatible with a setting --such as the blockchain-- where the morphism $A \to B$ (exchange of money) can be accounted for and publicly verified, while the morphism $B \to A$ (the goods have been shipped and received), cannot.

We see that, in this perspective, an escrow is nothing more than a way to wrap an unaccountable morphism $B \to A$ inside an accountable morphism $A \to B$: In other words, we split the morphism $A \to B$ in two parts, and sandwich the morphism $B \to A$ in the middle. Taking things apart, we get:
\begin{equation*}
  \begin{tikzpicture}
    \yScale[2]{\gau{$A$}\arTwo[green!10]{$\faLock$}}
    \down[2]{
      \yScale{
        \step{
          \node[below, font=\tiny] at (0,2*\nlen) {$B$};
          \step{
            \node[below, font=\tiny] at (0,2*\nlen) {$A$};
          }
          \up[2]{
            \idn
            \step[0.5]{\node[below, font=\tiny] at (0,2*\nlen) {$E$};}
          }
        }
      }
    }
    \step[2]{
      \yScale[2]{
        \twoAr[green!10]{$\faUnlock$}\dro{$B$}
      }
    }
  \end{tikzpicture}
  \hspace{8em}
  \begin{tikzpicture}
    \gau{$B$}\mor[red!10, minimum height=12, minimum width=12]{$\faEnvelope$}\dro{$A$}
  \end{tikzpicture}
\end{equation*}
The diagram on the left now has the familiar shape of a comb, furthering our suspects that it is indeed an optic, while the diagram on the right is just a morphism. Recall the following, which is a particular case of~\cite[2.1]{2001.07488}.
\begin{definition}[The category of optics]
  Let $\clM$ be a monoidal category; define the category $\Opt_\clM$ (or $\Opt$ for short) of optics as having
  \begin{itemize}
    \item objects the pairs $\cop{A}{B}$ in $\clM$;
    \item morphisms $\cop A B \to \cop S T$ defined by the coend
          \[\label{panopticon}\int^{M\in\clM} \clM(S,M\otimes A)\times \clM(M\otimes B,T).\]
  \end{itemize}
  Each such morphism is called an \emph{optic} with domain $\cop A B$ and codomain $\cop S T$.
\end{definition}
\begin{remark}
  With this definition in mind, we can provide an equational description of the escrow diagram in \cref{eq: escrow}. The diagram therein should clarify the aforementioned intuition that an equivalence class in the coend in Equation \eqref{panopticon} consists of a pair of morphisms $u : S \to M\otimes A $ and $v : M\otimes B \to T$ `waiting for' a morphism $f : A\to B$ in order to compose into a morphism $S \to T$ as
  \[\xymatrix{S \ar[r]^-u & M\otimes A \ar[r]^{M\otimes f} & M \otimes B \ar[r]^-v & T}\]
  Evidently, the family of maps $\tau_M : \clM(S,M\otimes A)\times \clM(M\otimes B,T) \times \clM(A,B)$ determined by the rule $(u,v,f)\mapsto v \circ M\otimes f \circ u$ is a co-wedge (cf. \cite[1.1.4]{coend-calcu}) in $M$, so that there exists a unique $\bar\tau  : \Opt(\cop A B, \cop S T) \to \clM(S,T)$.

  Graphical manipulation provides an efficient way to handle the coend and the equivalence classes therein.
\end{remark}
We are now ready to introduce our main definition:
\begin{definition}
  An \emph{escrow} from $A$ to $B$ is an optic $\cop BA \to \cop AB$.
\end{definition}
We record another definition for further use, that of a \emph{Tambara module}: a symmetric version of the original definition in \cite[§3]{pastro2008doubles} is enough for our purposes; a more general notion of Tambara module for a pair left/right actegory on a monoidal base is given in \cite[4.1]{2001.07488}.
\begin{definition}\label{tambara}
  Let $\clM$ be a (symmetric) monoidal category; a \emph{Tambara module} consists of a profunctor $P : \clM \pto \clM$ endowed with maps
  \[\xymatrix{\alpha^{[M]}_{AB} : P(A,B) \ar[r] & P(M\otimes A,M\otimes B)}\]
  satisfying suitable compatibility conditions with the associator and the unitor of the monoidal structure:
  \begin{itemize}
    \item the diagram
          \[\vcenter{\xymatrix{
              P(A,B) \ar[r]^-\alpha\ar[d]_\alpha& P((M\otimes N)\otimes A,M\otimes N\otimes B)\ar[d]^{(\star)}\\
              P(N\otimes A, N\otimes B) \ar[r]_-\alpha & P(M\otimes (N\otimes A), M\otimes (N\otimes B))
            }}\]
          commutes, where $(\star)$ is induced by functoriality of $P$ by the associator map of $\clM$;
    \item the diagram
          \[\vcenter{\xymatrix{
            P(A,B)\ar[r]^-{\alpha}\ar@{=}[dr] & P(I\otimes A, I\otimes B) \ar[d]^{(\star\star)}\\
            & P(A,B)
            }}\]
          commutes, if $(\star\star)$ is induced by functoriality of $P$ by the unitor of $\clM$.
  \end{itemize}
\end{definition}
\begin{remark}
  Tambara modules are the objects of a category $\cate{Tamb}$ where the hom-set between a Tambara module $(P,\alpha)$ and a Tambara module $(Q,\beta)$ is the pullback of certain two equalisers (we will not need an explicit description of this hom-set; the reader is invited to consult \cite{2001.07488} for the definition).
\end{remark}
All this will turn out useful in the proof of our Proposition \ref{Vermittler}.
\section{Composing escrows}\label{sec:compescrow}
Leveraging on the notions of composition for optics described in \cite{composing_opt1} and \cite{composing_opt2}, we get that we can compose escrows as follows:
\begin{equation*}
	\begin{tikzpicture}
		\yScale[2]{\gau{$A$}\arTwo[green!10]{$\faLock$}}
		\down[2]{
			\yScale{
				\step{
					\node[below, font=\tiny] at (0,2*\nlen) {$B$};
					\step{
						\node[below, font=\tiny] at (0,2*\nlen) {$A$};
					}
					\up[2]{
						\idn
						\step[0.5]{\node[below, font=\tiny] at (0,2*\nlen) {$E_1$};}
					}
				}
			}
		}
		\step[2]{
			\yScale[2]{
				\twoAr[green!10]{$\faUnlock$}
			}
		}
		\step[3]{
			\yScale[2]{\node[below, font=\tiny] at (0,2*\nlen) {$B$};}
		}
		\step[3]{
			\yScale[2]{\arTwo[green!10]{$\faLock$}}
			\down[2]{
				\yScale{
					\step{
						\node[below, font=\tiny] at (0,2*\nlen) {$C$};
						\step{
							\node[below, font=\tiny] at (0,2*\nlen) {$B$};
						}
						\up[2]{
							\idn
							\step[0.5]{\node[below, font=\tiny] at (0,2*\nlen) {$E_2$};}
						}
					}
				}
			}
			\step[2]{
				\yScale[2]{
					\twoAr[green!10]{$\faUnlock$}\dro{$C$}
				}
			}
		}
	\end{tikzpicture}
	\begin{tikzpicture}
		\up[2]{\gau{$A$}}
		\yScale{\arTwo[green!10]{$\faLock$}}
		\step{
			\node[below, font=\tiny] at (0,2*\nlen) {$B$};
			\up[4]\idn
			\arTwo[green!10]{$\faLock$}
			\down{\dro{$A$}}
			\step[0.5]{\down{\dro{$B$}}}
		}
		\step[2]{
			\up[4]{
				\idn
				\step[0.5]{\node[below, font=\tiny] at (0,2*\nlen) {$E_1$};}
			}
			\up[1]{
				\idn
				\step[0.5]{\node[below, font=\tiny] at (0,2*\nlen) {$E_2$};}
			}
		}
		\step[3]{
			\twoAr[green!10]{$\faUnlock$}
			\up[4]\idn
			\step{
				\node[below, font=\tiny] at (0,2*\nlen) {$A$};
				\yScale{\twoAr[green!10]{$\faUnlock$}\dro{$B$}}
			}
		}
	\end{tikzpicture}
\end{equation*}
Although very natural, both compositions are rather uninteresting. The one on the left models the idea that \B uses the money obtained from an escrow deal with \A to open an escrow deal with a third entity $C$ (\C).
The one on the right composes escrows with the usual optic composition and models the fact that escrows between $A$ and $B$, being either optics $\cop A B \to \cop B A$ or optics $\cop B A \to \cop A B$, can be nested into each other (indefinitely).

What we are interested in is a notion of composition that models trades happening with an intermediary: $A$ opens an escrow trade with $B$, which in turns makes an escrow trade with $C$, obtains the goods, and sends them back to $A$:
\begin{equation*}
	\begin{tikzpicture}
		\up[2]{\gau{$A$}}
		\yScale{\arTwo[green!10]{$\faLock$}}
		\step{
			\node[below, font=\tiny] at (0,2*\nlen) {$B$};
			\up[4]\idn
			\arTwo[green!10]{$\faLock$}
			\down{\dro{$C$}}
			\step[0.5]{\down{\dro{$A$}}}
			\up[4]{\node[below, font=\tiny] at (0,2*\nlen) {$E_1$};}
		}
		\step[2]{
			\up[4]{
				\down[2.5]{\yScale[1.5]{\braid}}
			}
		}
		\step[3]{
			\twoAr[green!10]{$\faUnlock$}
			\up[4]\idn
			\step{
				\node[below, font=\tiny] at (0,2*\nlen) {$B$};
				\yScale{\twoAr[green!10]{$\faUnlock$}\dro{$C$}}
				\up[4]{\node[below, font=\tiny] at (0,2*\nlen) {$E_2$};}
			}
		}
	\end{tikzpicture}
	\hspace{8em}
	\begin{tikzpicture}
		\gau{$C$}\mor[red!10, minimum height=12, minimum width=12]{$\faEnvelope$}
		\step{\node[below, font=\tiny] at (0,2*\nlen) {$B$};}
		\step{\mor[red!10, minimum height=12, minimum width=12]{$\faEnvelope$}\dro{$A$}}
	\end{tikzpicture}
\end{equation*}
As one can see, there is a witness $B \to A$ that unlocks the escrow that $A$ makes with $B$, and a witness $C \to B$ that unlocks the escrow that $B$ makes with $C$. These two witnesses are composed to obtain the witness that unlocks the composite escrow. This makes intuitive sense: $B$ should not be able to unlock his funds until the delivery of goods from $C$ all the way to $A$ is completed. On the other hand, $C$ depends on $B$'s behavior for their own payment. To better understand this, imagine the following scenario: $C$ successfully delivers the goods to $B$, while $B$ fails to deliver to $A$. In such a setting, $C$ won't be paid even if they completed their work successfully. This is because the escrow composition defers $C$'s payment to the moment when the whole escrow sequence is completed. Moreover, we see that in such a setting $B$ unlocks his own escrow before $C$, and as such we informally deem $B$'s behavior as `greedy'. There are other ways to compose the building blocks of these two escrows to obtain a `gentleman $B$' scenario where the escrow between $B$ and $C$ is settled indepentently --and before-- of $B$'s escrow with $A$, but we weren't able to find a convincing categorical description of this composition, which we thus omit.

\begin{figure}
	\begin{center}
		\begin{tikzpicture}[>=stealth']
			\draw[->] (0,0) -- (0,-6);
			\draw[->] (2.5,0) -- (2.5,-6);
			\draw[->] (5,0) -- (5,-6);
			\draw[->] (7.5,0) -- (7.5,-6);
			\draw[->] (10,0) -- (10,-6);
			\node[nd] (A1)  at (0,-1){\aki};
			\node[nd] (A2) at (0,-5) {\aki};
			\node[nd] (V1) at (2.5,-1.5) {\vault};
			\node[nd] (B1) at (5,-1.5) {\bogdan};
			\node[nd] (V2) at (2.5,-2.5) {\vault};
			\node[nd] (B2) at (5,-2.5) {\bogdan};
			\node[nd] (B3) at (5,-4) {\bogdan};
			\node[nd] (V'1) at (7.5,-3) {\vault};
			\node[nd] (C1) at (10,-1.5) {\candice};
			\node[nd] (C2) at (10,-3) {\candice};
			\node[nd] (V3) at (2.5,-5) {\vault};
			\node[nd] (B4) at (5,-5) {\bogdan};
			\node[nd] (V'2) at (7.5,-5) {\vault};
			\node[nd] (C3) at (10,-5) {\candice};
			\draw[->, thick, gray] (A1) -- node[lbl] {lock \$} (V1);
			\draw[->, thick, gray] (B1) -- node[lbl] {locked?} (V2);
			\draw[->, thick, gray] (V2) -- node[lbl] {yes} (B2);
			\draw[->, thick, gray] (B2) -- node[lbl] {lock \$} (V'1);
			\draw[->, thick, gray] (C1) -- node[lbl] {locked?} (V'1);
			\draw[->, thick, gray] (V'1) -- node[lbl] {yes} (C2);
			\draw[->, thick, gray] (C2) -- node[lbl] {send goods} (B3);
			\draw[->, thick, gray] (B3) -- node[lbl] {send goods} (A2);
			\draw[->, thick, gray] (A2) -- node[lbl] {unlock} (V3);
			\draw[->, thick, gray] (V3) -- node[lbl] {send \$} (B4);
			\draw[->, thick, gray] (B4) -- node[lbl] {unlock} (V'2);
			\draw[->, thick, gray] (V'2) -- node[lbl] {send \$} (C3);
		\end{tikzpicture}
		\caption{An escrow trade with a greedy intermediary.}\label{fig: escrow_greed_trade}
	\end{center}
\end{figure}

We now formalize the escrow composition highlighted in the picture above. In the following, we shorten the set $\Opt(\cop S T,\cop T S)$ as $\lr T S$.
\begin{definition}[The building blocks of escrow composition]
	For a pair $(h,k)$ of elements in $\lr UT\times\lr TS$, we define a `composition'
\[\xymatrix{\firstblank\diamond\firstblank : \lr UT\times\lr TS \ar[r] & \lr US} : (k,h)\mapsto k\diamond h\]
as follows:
\begin{itemize}
	\item first choose representative elements for $h,k$ inside the equivalence classes of the two coends involved: let's say $h \in \int^{M\in\clM} \clM(T,M\otimes S)\times \clM(M\otimes T,S)$ is represented by a pair $(h_1,h_2)$ where
	      \[h_1 : T \to M\otimes S, \quad h_2 : M\otimes T \to S\]
	      for $M\in\clM$, and similarly, $k \in \int^{N\in\clM} \clM(U,N\otimes T)\times \clM(N\otimes U,T)$ is a pair $(k_1,k_2)$:
	      \[k_1 : U\to N\otimes T, \quad k_2 : N\otimes U \to T\]
	      for $N\in\clM$.
	\item second, send these pairs to the composite arrows
	      \[\Big((N\otimes h_1)\circ k_1 , h_2 \circ (M\otimes k_2) \circ (\sigma_{N,M} \otimes U) \Big)\]
\end{itemize}
In terms of string diagrams, this is what happens: the two pairs
\begin{gather}
	\left(
	\begin{tikzpicture}[baseline=(current bounding box.center)]
		\arTwo[magenta!10]{h_1}
	\end{tikzpicture}\, , \,
	\begin{tikzpicture}[baseline=(current bounding box.center)]
		\twoAr[blue!10]{h_2}
	\end{tikzpicture}
	\right)
	\qquad
	\left(  \begin{tikzpicture}[baseline=(current bounding box.center)]
		\arTwo[teal!10]{k_1}
	\end{tikzpicture}\, , \,
	\begin{tikzpicture}[baseline=(current bounding box.center)]
		\twoAr[yellow!10]{k_2}
	\end{tikzpicture} \right)
\end{gather}
get sent to  the (equivalence class of the) pair
\[\label{qui}
	\left(
	\begin{tikzpicture}[baseline=(current bounding box.center)]
			\up[2]{\gau{$U$}}
			\yScale{\arTwo[teal!10]{k_1}}
			\step{\arTwo[magenta!10]{h_1}\up[4]\idn
				\down{\dro{$S$}}
				\up{\dro{$M$}}
				\up[4]{\dro{$N$}}
			}
		\end{tikzpicture}
	\quad ,
	\quad
	\begin{tikzpicture}[baseline=(current bounding box.center)]
			\down{\gau{$U$}}
			\up{\gau{$M$}}
			\up[4]{\gau{$N$}}
			\up[1.5]{\yScale[1.5]\braid}\down\idn
			\step{\twoAr[yellow!10]{k_2}\up[4]\idn}
			\step[2]{\yScale{\twoAr[blue!10]{h_2}}\up[2]{\dro{$S$}}}
		\end{tikzpicture}
	\right)
\]
where a twisting $N\otimes M \to M\otimes N$ has been introduced in order for $(k\diamond h)_1$ and $(k\diamond h)_2$ to have the correct domains.
\end{definition}
\begin{remark}
	This is a well-defined correspondence, i.e. it yields the same result if we change representatives in the same equivalence classes of $h,k$: in order to see this, recall that the equivalence relation in the coend in Equation \eqref{panopticon} is generated by the pair of maps
	\[\xymatrix{\displaystyle\sum_{u : M\to N} \clM(T,M\otimes S)\times \clM(N\otimes T, S) \ar@<.33em>[r]^-l\ar@<-.33em>[r]_-r & \displaystyle\sum_A\, \clM(T,A\otimes S)\times \clM(A\otimes T,S)}\]
	acting as follows for a given $u : M \to N$:
	\begin{itemize}
		\item $l_u(f,g) = \big((u\otimes S)\circ f, g\big)$;
		\item $r_u(f,g) = \big(f, g\circ (u\otimes T)\big)$.
	\end{itemize}
	The quickest way to see that the operation defined in Equation \eqref{qui} is compatible with this equivalence relation is through string diagram representation: writing morphisms as `combs' we get that
	\begin{align*}
		l_{(u,v)}(h_1,h_2)\diamond (k_1,k_2) & = \left(
		\begin{tikzpicture}[scale=.5, baseline=(current bounding box.center)]
				\yScale{\arTwo[teal!10]{}}
				\step{\arTwo[magenta!10]{}\up[4]\idn
				}
				\yScale[1.5]{\up{\step[2]{\UDmor{u}{v}}}}\step[2]{\Did}
			\end{tikzpicture}
		\quad ,
		\quad
		\begin{tikzpicture}[scale=.5, baseline=(current bounding box.center)]
				\up[1.5]{\yScale[1.5]\braid}\down\idn
				\step{\twoAr[yellow!10]{}\up[4]\idn}
				\step[2]{\yScale{\twoAr[blue!10]{}}
				}
			\end{tikzpicture}
		\right)                                                                         \\
		                                     & =
		\left(
		\begin{tikzpicture}[scale=.5, baseline=(current bounding box.center)]
				\yScale{\arTwo[teal!10]{}}
				\step{\arTwo[magenta!10]{}\up[4]\idn
				}
			\end{tikzpicture}
		\quad ,
		\quad
		\begin{tikzpicture}[scale=.5, baseline=(current bounding box.center)]
				\step[-1]{\yScale[1.5]{\up{\UDmor{v}{u}}}\Did}
				\up[1.5]{\yScale[1.5]\braid}\down\idn
				\step{\twoAr[yellow!10]{}\up[4]\idn}
				\step[2]{\yScale{\twoAr[blue!10]{}}
				}
			\end{tikzpicture}
		\right)                                                                         \\
		                                     & = (h_1,h_2) \diamond r_{(u,v)}(k_1,k_2).
	\end{align*}
\end{remark}
\begin{proposition}
	The rule above is associative and unital: this means that the diagrams
	\begin{adju}
		\xymatrix{
		\lr VU\times \lr U T \times \lr TS\ar[r]^-{\lr V U \times \diamond}\ar[d]_{\diamond\times\lr T S} & \lr VU \times \lr US \ar[d]^\diamond\\
		\lr VT \times \lr TS \ar[r]_-\diamond & \lr V S
		}
		\quad
		\xymatrix{
		\lr T S \ar[r]^-{\lr TS\times\nu_S}\ar[d]_{\nu_T \times\lr TS } & \lr T S \times \lr S S \ar[d]^\diamond \\
		\lr T T \times \lr T S \ar[r]_-\diamond & \lr T S
		}
	\end{adju}
	commute for a choice of `unit' $\nu_X : 1 \to \lr X X$.
\end{proposition}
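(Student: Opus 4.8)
The plan is to reduce both laws to coherence in the symmetric monoidal category $\clM$, working throughout with chosen representatives. The preceding remark shows that $\diamond$ descends to the coend quotient, so I am free to pick one representative pair for each escrow and to verify the two laws up to the equivalence relation generated by the maps $l_u,r_u$ of that remark; recall that for an isomorphism of residuals $u$ these maps identify $\big((u\otimes S)\circ f, g\big)$ with $\big(f, g\circ(u\otimes T)\big)$.

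For associativity, fix representatives $g=(g_1,g_2)$, $k=(k_1,k_2)$, $h=(h_1,h_2)$ over residuals $P$, $N$, $M$ for escrows in $\lr VU$, $\lr UT$, $\lr TS$. Unwinding the definition of $\diamond$ twice, $(g\diamond k)\diamond h$ is a pair over the residual $(P\otimes N)\otimes M$, whereas $g\diamond(k\diamond h)$ is a pair over $P\otimes(N\otimes M)$; the two are related by the associator $a=a_{P,N,M}$. I would first observe that both first components reduce, up to reassociation of the residual, to $(P\otimes N\otimes h_1)\circ(P\otimes k_1)\circ g_1$, so that $(a\otimes S)\circ\big((g\diamond k)\diamond h\big)_1=\big(g\diamond(k\diamond h)\big)_1$; this is immediate from functoriality of $\otimes$. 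The content of the statement is then concentrated in the second components: after cancelling the common prefix $h_2\circ(M\otimes k_2)$, matching them amounts to the single identity
\[
(\sigma_{N,M}\otimes U)\circ\big((N\otimes M)\otimes g_2\big)\circ(\sigma_{P,N\otimes M}\otimes V)
=(M\otimes N\otimes g_2)\circ(M\otimes\sigma_{P,N}\otimes V)\circ(\sigma_{P\otimes N,M}\otimes V)
\]
of maps $P\otimes N\otimes M\otimes V\to M\otimes N\otimes U$. Both sides transport $P$ past $N\otimes M$ so as to present $g_2$ with its input $P\otimes V$, and permute $N,M$ into the order $M,N$; hence they realise the same permutation, and by naturality of $\sigma$ (to slide $g_2$ through the crossings) together with the hexagon identities they coincide. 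Granting this in the form $\big(g\diamond(k\diamond h)\big)_2\circ(a\otimes V)=\big((g\diamond k)\diamond h\big)_2$, the two triple composites are exactly an $l_a$--$r_a$ related pair, hence equal in $\lr VS$.

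For unitality, I take $\nu_X\colon 1\to\lr XX$ to be the class of the identity optic on $\cop XX$, i.e. the pair of unitors over the residual $I$. Substituting $\nu_S$ (resp.\ $\nu_T$) into the appropriate slot of $\diamond$ and simplifying, the inserted symmetry $\sigma_{-,I}$ and the unitors collapse by the triangle identity and the coherence of $\sigma$ with the unit, returning the chosen representative of $h$ up to the unitor isomorphism of residuals; this yields $h\diamond\nu_S=h$ and $\nu_T\diamond h=h$.

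I expect the braided identity in the second component of the associativity square to be the only genuine obstacle: it is the sole place where something beyond bare functoriality is used, and making the appeal to the coherence theorem for symmetric monoidal categories precise---rather than merely drawing the equal string diagrams---is the crux. Everything else is bookkeeping of unitors and associators absorbed into the coend equivalence.
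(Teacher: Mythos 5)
Your proof is correct and is in substance the paper's own argument: the same unit $\nu_X$ (the pair of unitors over the residual $I$, exactly the paper's $(\lambda^{-1},\rho)$ at $M=I$), the same reduction to chosen representatives justified by well-definedness of $\diamond$ on the coend quotient, and the same identification of the two composites of each law inside $\lr VS$ (resp.\ $\lr TS$). The only divergence is presentational: the paper performs the verification by displaying the two string-diagram pairs and leaving their equality to diagrammatic deformation (symmetric monoidal coherence and strictness of the base being built into the graphical calculus), whereas you spell out equationally what the pictures silently absorb --- the re-bracketing of residuals handled by the $l_u$/$r_u$ relations with $u$ an associator or unitor, and the appeal to naturality of $\sigma$ plus the hexagon identities for the second components --- so your write-up is, if anything, more explicit than the paper's.
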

\begin{proof}
	Associativity: suppose given a triple $(h,k,p)$ such that
	\[
		h = \left(
		\begin{tikzpicture}[baseline=(current bounding box.center)]
				\arTwo[magenta!10]{h_1}
			\end{tikzpicture}\, , \,
		\begin{tikzpicture}[baseline=(current bounding box.center)]
				\twoAr[blue!10]{h_2}
			\end{tikzpicture}
		\right)\quad
		k =   \left(  \begin{tikzpicture}[baseline=(current bounding box.center)]
				\arTwo[teal!10]{k_1}
			\end{tikzpicture}\, , \,
		\begin{tikzpicture}[baseline=(current bounding box.center)]
				\twoAr[yellow!10]{k_2}
			\end{tikzpicture} \right)\quad
		p =   \left(
		\begin{tikzpicture}[baseline=(current bounding box.center)]
				\arTwo[orange!10]{p_1}
			\end{tikzpicture}\, , \,
		\begin{tikzpicture}[baseline=(current bounding box.center)]
				\twoAr[cyan!10]{p_2}
			\end{tikzpicture}
		\right)
	\]
	In string diagram notation, the two possible compositions of $h,k,p$ are given, respectively, by the following pairs:
	\[
		\underset{(h\diamond k)\diamond p}{\left( \begin{tikzpicture}[scale=.5,baseline=(current bounding box.center)]
				\yScale[3]{\arTwo[orange!10]{}}
				\down{\yScale{\step{\arTwo[teal!10]{}\xScale{\up[3]\idn}}}
					\step[2]{\arTwo[magenta!10]{}\up[4]\idn}}
			\end{tikzpicture}
			\,,\,
			\begin{tikzpicture}[scale=.75]
				\step[-1]{\up[3.5]\twoid}
				\down[.5]{\step[-1]{\yScale[3]\coturn}
					\twoAr[cyan!10]{}}
				\up[3.5]{\braid}
				\step{\yScale[1.5]{\twoAr[yellow!10]{}}}
				\step[2]{\up[1.25]{\yScale[1.75]{\twoAr[blue!10]{}}}}
				\step{\up[4.5]\idn}
				\step[-1]{\down[1.5]\idn}
			\end{tikzpicture}
			\right)}
		\qquad
		\underset{h\diamond (k\diamond p)}{\left(
			\begin{tikzpicture}[scale=.5,baseline=(current bounding box.center)]
				\yScale[3]{\arTwo[orange!10]{}}
				\down{\yScale{\step{\arTwo[teal!10]{}\xScale{\up[3]\idn}}}
					\step[2]{\arTwo[magenta!10]{}\up[4]\idn}}
			\end{tikzpicture}
			\,,\,\begin{tikzpicture}[scale=.75]
				\down[.5]{\up[3]\idn\twoAr[cyan!10]{}}
				\step{\yScale[1.5]{\twoAr[yellow!10]{}}}
				\step[2]{\up[1.25]{\yScale[1.75]{\twoAr[blue!10]{}}}}
				\step{\up[4.5]\idn}
				\step[-1]{\up[1.5]{\braid}\down[1.5]\idn}
				\up[1.5]{\step[-1]{\draw[wire] (0,0) -| (1,1.25) -- ++(1,0);}}
			\end{tikzpicture}
			\right)}
	\]
	Unitality: define $\nu_S$, as the equivalence class of the pair $(\lambda^{-1},\rho)$ in the coend
	\[\label{aoaou}\int^M \clM(S,M\otimes S)\times \clM(M\otimes S,S)\]
	(more formally, consider the summand at $M=I$ --the monoidal unit; the product of hom sets
	\[\clM(S,I\otimes S)\times \clM(I\otimes S,S)\]
	contains the element $\left(\var{S}{I\otimes S},\var{I\otimes S}{S}\right)$ obtained from the unitors of the monoidal structure; now project on the quotient that defines the coend in Equation \eqref{aoaou}) and use a straightforward string diagrammatic argument.
\end{proof}
\section{Using agents as intermediaries}\label{sec:usingAgentsAsIntermediaries}
In Section \ref{sec:compescrow} we described escrow composition roughly as `to trade with \C, \A opens an escrow with \B, \B opens an escrow with \C, goods are delivered, and the escrows are separately settled'. This kind of composition has its drawbacks, and to understand them, we will have to briefly venture into the exciting (?) world of legal matters.

In our escrow composition, $B$ is essentially buying the goods from $C$ on behalf of $A$. Legally, the trades between $A$ and $B$ and $B$ and $C$ are independent, and $B$ is considered a freelancer, for which the law entails a precise taxation regime.

Nevertheless, sometimes we would like to have $B$ considered as an employee working for $C$. For example, $C$ may be a producer, and $B$ may be an intermediate agent that stockpiles a warehouse of goods produced by $C$, selling them on their behalf. Tax-wise, these two scenarios are very different, so we want to design an escrow procedure `sensitive to the taxation regime'. One possibility is the following:
\begin{equation*}
  \begin{tikzpicture}
    \yScale[1]{\gau{$A$}\arTwo[green!10]{$\faLock$}}
    \down[1]{
      \step{
        \node[below, font=\tiny] at (0,2*\nlen) {$C$};
        \step{
          \node[below, font=\tiny] at (0,2*\nlen) {$A$};
        }
        \up[2]{
          \idn
          \step[0.5]{\node[below, font=\tiny] at (0,2*\nlen) {$E_A$};}
        }
      }
    }
    \step[2]{
      \yScale[1]{
        \twoAr[green!10]{$\faUnlock$}\dro{$C$}
      }
    }
    \down[4]{
      \yScale[1]{\gau{$B$}\arTwo[green!10]{$\faLock$}}
      \down[1]{
        \step{
          \node[below, font=\tiny] at (0,2*\nlen) {$C$};
          \step{
            \node[below, font=\tiny] at (0,2*\nlen) {$A$};
          }
          \up[2]{
            \idn
            \step[0.5]{\node[below, font=\tiny] at (0,2*\nlen) {$E_B$};}
          }
        }
      }
      \step[2]{
        \yScale[1]{
          \twoAr[green!10]{$\faUnlock$}\dro{$B$}
        }
      }
    }
  \end{tikzpicture}
  \hspace{8em}
  \hspace{6em}
  \begin{tikzpicture}
    \down[0]{
      \gau{$C$}\mor[red!10, minimum height=12, minimum width=12]{$\faEnvelope$}
      \step{\node[below, font=\tiny] at (0,2*\nlen) {$B$};}
      \step{\mor[red!10, minimum height=12, minimum width=12]{$\faEnvelope$}\dro{$A$}}
    }
  \end{tikzpicture}
\end{equation*}
As one can see, we have a standard escrow between $A$ and $C$. Independently from this, we have another \emph{optic} where $B$ puts some funds in escrow, but this time it is $A$ that can unlock it. In doing so, $B$ gets back his own money. In doing so, $B$ \emph{is never buying merchandise from} $C$ since there is no transfer of money from $B$ to $C$. We interpret this as having $B$ acting as an \emph{intermediary}: $B$ stakes some collateral to ensure that he will not run away with $C$'s merchandise and receives the collateral back once the trade between $A$ and $C$ is settled. 
There are a couple of striking features in this construction:
\begin{itemize}
  \item The collateralization of $B$'s money is not an optic $\cop{B}{X} \to \cop{X}{B}$ for some $X$, and thus not an escrow according to our definition;
  \item The morphism witnessing an exchange of goods from $C$ to $A$ is again obtained composing the witnesses $B \to A$ and $C \to B$ and should be used twice in this construction to fill both combs. This situation is compatible with the interpretation that combs are to be filled with red, information-like morphisms.
  \item We need to pre-compose the comb with some morphism of type $B \to A \otimes B$: intuitively, $B$ must be available before the escrow deal between $A$ and $C$ is set up, and signal his availability to $A$;
  \item We need to post-compose the comb with some morphism of type $C \otimes B \to B$: Intuitively, if $B$ works for $C$, some form of settlement between the two must take place after the mediated escrow is concluded.
\end{itemize}
\subsection{Information and resources}
Let's start by recalling a general fact: let $\clM$ be a monoidal category, let $A$ be a comonoid and $C$ be a monoid in $\clM$. Then, the hom-set $\clM(A,C)$ becomes a monoid, whose operation is called \emph{convolution}, and it is defined as
\[(f,g)\mapsto f * g : \xymatrix{A \ar[r]^-c& A\otimes A \ar[r]^{f\otimes g}& C\otimes C \ar[r]^-m& C}\]
where $c : A \to A\otimes A$ is the comultiplication and $m : C\otimes C \to C$ the multiplication map.

We can use a similar idea to prove that the set of \emph{optics} $\cop CA \to \cop AC$ is a monoid.
\setcounter{subsection}{0}
\begin{lemma}\label{itsa_monoid}
  Let $A, C$ be objects of $\clM$, let $A$ be a comonoid, and $C$ be a monoid. Then, the set of escrows $\lr A C$ is a monoid.
\end{lemma}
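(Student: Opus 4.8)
The plan is to transport the convolution monoid on $\clM(A,C)$, just recalled, to optics. The conceptual reason the statement holds is that, in the symmetric monoidal category $\Opt_\clM$ (with $\cop ST\otimes\cop{S'}{T'}=\cop{S\otimes S'}{T\otimes T'}$ and unit $\cop II$), the object $\cop AC$ is a monoid and $\cop CA$ is a comonoid. Concretely, the multiplication $\cop AC\otimes\cop AC=\cop{A\otimes A}{C\otimes C}\to\cop AC$ is the optic with residual $I$ represented by the pair $(c,m)$, and its unit $\cop II\to\cop AC$ by $(e,u)$, where $e:A\to I$ and $u:I\to C$ are the counit of $A$ and the unit of $C$; dually, the comultiplication $\cop CA\to\cop CA\otimes\cop CA$ is represented by $(m,c)$ and the counit $\cop CA\to\cop II$ by $(u,e)$. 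Since the hom-set from a comonoid to a monoid in a monoidal category always carries a convolution product, $\lr AC=\Opt(\cop CA,\cop AC)$ inherits a monoid structure. I would either invoke this directly, or unwind it explicitly in the diagrammatic style of the paper as follows.

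For escrows $h=[(h_1,h_2)]$ and $k=[(k_1,k_2)]$ with residuals $M$ and $N$, I define the product $h*k$ to be the class, with residual $M\otimes N$, of the pair whose first leg is
\[\xymatrix@C=1.5em{A\ar[r]^-c & A\otimes A\ar[r]^-{h_1\otimes k_1} & M\otimes C\otimes N\otimes C\ar[r]^-{\sigma} & M\otimes N\otimes C\otimes C\ar[r]^-{m} & M\otimes N\otimes C}\]
and whose second leg is
\[\xymatrix@C=1.5em{M\otimes N\otimes A\ar[r]^-c & M\otimes N\otimes A\otimes A\ar[r]^-{\sigma} & M\otimes A\otimes N\otimes A\ar[r]^-{h_2\otimes k_2} & C\otimes C\ar[r]^-m & C}\]
where $\sigma$ is the evident symmetry and $c,m$ abbreviate the appropriately whiskered comultiplication and multiplication. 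The unit element $\nu:1\to\lr AC$ selects the class with residual $I$ of the pair $(A\to I\to C,\ I\otimes A\cong A\to I\to C)$ built from $e$ and $u$. This is exactly the composite of the two formulas above specialized through the interleaving symmetry of $\Opt$.

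Three verifications then remain. \emph{Well-definedness}: replacing $(h_1,h_2)$ by $\bigl((t\otimes C)\circ h_1,\ h_2\circ(t\otimes A)\bigr)$ along a sliding map $t:M\to M'$ of the coend must not alter the class of $h*k$; diagrammatically this is immediate, since the $M$-strand is passed untouched into the residual and $t$ merely slides across $\sigma$ from the first leg to the second, exactly as in the well-definedness argument given above for $\diamond$. \emph{Associativity}: follows from coassociativity of $c$, associativity of $m$, and the hexagon coherence for $\sigma$. \emph{Unitality}: the two unit laws reduce to the counit law of $A$ (which erases the strand duplicated by $c$) and the unit law of $C$ (which absorbs the spurious factor $u$). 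I expect the only genuine obstacle to be well-definedness — equivalently, checking that the comonoid/monoid axioms above really hold at the level of optic equivalence classes rather than of representatives — since one must see that the two independent coend quotients, for $h$ and for $k$, assemble into the single quotient at residual $M\otimes N$; this is cleanest to verify through the string-diagram sliding argument rather than by manipulating the defining coequalisers directly.
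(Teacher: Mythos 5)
Your proposal is correct, and at the level of formulas it constructs exactly the same monoid as the paper: the paper's proof defines the product of $h=(h_1,h_2)$ and $k=(k_1,k_2)$ by precisely the string diagram you wrote out (comultiplication of $A$ feeding $h_1\otimes k_1$, a symmetry collecting the residuals into $M\otimes N$, multiplication of $C$ closing both legs), and its unit is the same residual-$I$ pair built from the counit $\epsilon\colon A\to I$ and the unit $\eta\colon I\to C$. Where you genuinely differ is in the justification. The paper posits these operations directly and asserts the monoid axioms are ``routinely checked'' by diagram deformation, invoking convolution only as an informal analogy (``we can use a similar idea''); you make the analogy literal, lifting $A$ and $C$ to a comonoid $\cop{C}{A}$ and a monoid $\cop{A}{C}$ inside the monoidal category $\Opt_\clM$ and then citing the general convolution lemma for hom-sets from a comonoid to a monoid. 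This buys you associativity, unitality and --- crucially --- well-definedness on coend equivalence classes for free from general facts; the well-definedness issue you flag at the end is exactly what the paper's ``routinely checks'' glosses over, so your route is arguably the more watertight one. The price is that it leans on two facts the paper never states and which you should record explicitly: first, that $\Opt_\clM$ is (symmetric) monoidal under the pointwise tensor $\cop{S}{T}\otimes\cop{S'}{T'}=\cop{S\otimes S'}{T\otimes T'}$ --- true under the paper's blanket assumption that $\clM$ is symmetric, since interleaving the residuals requires the symmetry, but a nontrivial result needing a citation to the optics literature rather than a one-line check; second, that the residual-$I$ pairs $(c,m)$, $(\epsilon,\eta)$, $(m,c)$, $(\eta,\epsilon)$ really satisfy the co/monoid axioms in $\Opt_\clM$, which is immediate once one notes that residual-$I$ optics compose componentwise (contravariantly in the first leg, covariantly in the second). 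With those two points made explicit, your proof is complete and matches the paper's construction on the nose.
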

\begin{proof}
  One routinely checks the associativity and unit axioms from the following definitions:
  \begin{itemize}
    \item given two escrows $(h_1,h_2)$ and $(k_1,k_2)$ as in
          \[
            \esc{A}{h}{C} \qquad \esc[teal][yellow]{A}{k}{C}[N]
          \]
          we compose them into the escrow $h\otimes k$
          \[
            \begin{tikzpicture}
              \yScale{\comult}
              \step{\up[4]{\arTwo[magenta!10]{h_1}}\arTwo[teal!10]{k_1}}
              \step[2]{\up[4]{\xScale[3]\Uid}\up[2]\turn\down{\yScale{\mult}}}
              \step[3]{\up[3]\idn}
              \step[4]{
                \down{\yScale{\comult}\up[3]\coturn}
                \step{\up[4]{\twoAr[blue!10]{h_2}}\twoAr[yellow!10]{k_2}}
                \step[2]{\yScale{\mult}}
              }
            \end{tikzpicture}
          \]
          using the monoid and comonoid structures of $C, A$ respectively;
    \item the unit escrow $(i_1,i_2)$ is the lower morphism in
          \[
            \begin{tikzpicture}
              \down[2.5]{\yScale[1.5]{\step[-1]{\comult}}}
              \arTwo[magenta!10]{h_1} \down[3]{\counit \down[2]\unit}\step[4]{\twoAr[blue!10]{h_2} \down[3]{\unit \down[2]\counit}}
              \down[5]{\step{\yScale{\mult}} \step[3]{\yScale{\comult}}}
              \down[2.5]{\yScale[1.5]{\step[5]{\mult}}}
              \step{\up{\xScale[3]\idn}}
              \down[4]\akasa \step[4]{\down[4]\akasa}
              \node at (.5,-.5) {\tiny $i_1$};
              \node at (4.5,-.5) {\tiny $i_2$};
            \end{tikzpicture}
          \]
          obtained using the counit of $A$ and the unit of $C$. Evidently, the co/monoid rules entail that $h\otimes i$ is just equal to $h$, for every other optic $h=(h_1,h_2)$. Similarly, one sees that $i\otimes k=k$. \qedhere
  \end{itemize}
\end{proof}
\begin{remark}
  It is worth spelling out the precise way in which we obtained the unit escrow $i=(i_1,i_2)$: its components consist of the equivalence class of $(i_1,i_2)$ in the coend $\lr A C$, where we define
  \[
    \vcenter{\xymatrix{
    A \ar[d] \ar@{}[dr]|{i_1}& I\otimes C & I\otimes A \ar[d] \ar@{}[dr]|{i_2}& C \\
    I \ar[r] & I\otimes I \ar[u]& I \otimes I \ar[r]& I\ar[u]
    }}
  \]
  using the counit $\epsilon: A \to I$ and the unit $\eta: I \to C$ of the comonoid $A$ and monoid $C$.
\end{remark}
\begin{remark}
  We recall what is the structure of a \emph{module} over a monoid in $\clM$ and of a \emph{comodule} over a comonoid: a module $B$ over the monoid $C$ is an object of $\clM$ endowed with an action map $a: C\otimes B \to B$ subject to the usual axioms of compatibility with the unit and multiplication of $C$:
  \[
    \begin{tikzpicture}[yscale=1.5, xscale=.75]
      \up\unit \down\idn
      \step{\act{$a$}}
      \eql{2.5}
      \step[3]{\idn}
    \end{tikzpicture}
    \qquad \qquad
    \begin{tikzpicture}[yscale=1.5, xscale=.75]
      \up\mult
      \down\idn
      \step{\act{$a$}}
      \step[3]{
        \up\idn
        \down{\act{$a$}}
        \step{\act{$a$}}
      }
      \eql{2.5}
    \end{tikzpicture}
  \]
  expressed by the commutative diagrams
  \[
    \vcenter{\xymatrix{
      I\otimes B\ar[r]^{u\otimes B}\ar[dr]_{\lambda_B} & C \otimes B \ar[d]^a \\
      & B
    }}
    \qquad
    \vcenter{\xymatrix{
      C \otimes C \otimes B \ar[r]^{C\otimes a}\ar[d]_{m\otimes B} & C \otimes B \ar[d]^a \\
      C \otimes B \ar[r]_a & B
    }}
  \]
  using the multiplication and unit of $C$.

  Dually, an $A$-comodule is an object $B\in\clM$ endowed with a coaction map $c : B \to B\otimes A$, subject to the dual axioms of compatibility:
  \[
    \begin{tikzpicture}[yscale=1.5, xscale=.75]
      \coact{$c$}
      \step{\up\counit \down\idn}
      \eql{2.5}
      \step[3]{\idn}
    \end{tikzpicture}
    \qquad \qquad
    \begin{tikzpicture}[yscale=1.5, xscale=.75]
      \down{\coact{$c$}}
      \step{\coact{$c$}\down[2]\idn}
      \eql{2.5}
      \step[3]{
        \coact{$c$}
        \step{\up\idn}
        \step{\down\comult}
      }
    \end{tikzpicture}
  \]
  expressed by the commutative diagrams
  \[
    \vcenter{\xymatrix{
      B \ar[dr]^{\rho_B}\ar[d]_c & \\
      B\otimes A \ar[r]_-{B\otimes \epsilon} & B\otimes I
    }}
    \qquad
    \vcenter{\xymatrix{
      B \ar[r]^c\ar[d]_c & B \otimes A \ar[d]^{B\otimes\sigma}\\
      B\otimes A \ar[r]_-{c\otimes A} & B\otimes A \otimes A
    }}
  \]
  using the comultiplication and counit of $A$.
\end{remark}
\begin{notation}\label{notat_coact}
  Let $C$ be a monoid in $\clM$ and let $B$ be a $C$-module; the action $a  : C\otimes B \to B$ is usually denoted as an infix dot, $a(x,b)=x.b$ for every $x\in C$, $b\in B$.

  Dually, let $B$ be an $A$-comodule; the coaction $c : B \to B\otimes A$ here shall be interpreted as a map sending an element $b\in B$ to a pair $(b',y)$ of an element $y\in A$ and an element $b'=b'_y$ in a new \emph{state} depending on $y$.

  Until the end of the section, we stipulate that $B$ is simultaneously a module for the monoid $C$ and a comodule for the comonoid $A$.
\end{notation}
\begin{theorem}\label{escaction}
  There exists an action
  \[\_\rtimes\_ : \lr A C \times \Opt\Big(\cop B B , \cop AC\Big) \to \Opt\Big(\cop B B , \cop AC\Big)\]
  of the monoid $\lr A C$ of Lemma \ref{itsa_monoid} on the set of optics with common codomain $\cop AC$.
\end{theorem}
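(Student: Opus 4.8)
The plan is to define the action $\rtimes$ as a convolution built from the comonoid structure of $A$ and the monoid structure of $C$, exactly in the spirit of \cref{itsa_monoid}, and then to reduce both action axioms to the (co)associativity and (co)unit laws already in play there. First I would fix representatives $h=(h_1,h_2)$ for an escrow in $\lr A C$, with $h_1:A\to M\otimes C$ and $h_2:M\otimes A\to C$, and $\phi=(\phi_1,\phi_2)$ for an optic in $\Opt(\cop BB,\cop AC)$, with $\phi_1:A\to N\otimes B$ and $\phi_2:N\otimes B\to C$.

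The key preliminary move is to \emph{close up} the escrow $h$ into an honest morphism $\hat h:A\to C$ by feeding its unlock leg a fresh copy of the input, supplied by the comultiplication:
\[
  \hat h \;:=\; m\circ(\mathrm{id}_C\otimes h_2)\circ\sigma\circ(h_1\otimes\mathrm{id}_A)\circ c ,
\]
where $c:A\to A\otimes A$ is the comultiplication, $m:C\otimes C\to C$ the multiplication, and $\sigma$ the symmetry reordering $M\otimes C\otimes A$ into $C\otimes M\otimes A$. This detour is essentially forced: since the focus of $\phi$ is $B$ rather than $C$, the unlock $h_2$ cannot fire in the second leg of the output optic, where no $A$ is available, so it must be discharged against a comultiplied $A$ already in the first leg. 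I would then set $h\rtimes\phi$ to be the class of the optic with residual $C\otimes N$ whose first leg is $(\hat h\otimes\phi_1)\circ c:A\to C\otimes N\otimes B$ and whose second leg is $m\circ(\mathrm{id}_C\otimes\phi_2):C\otimes N\otimes B\to C$. Intuitively this convolves the $A\to C$ datum extracted from $h$ with the $A\to C$ behaviour that $\phi$ produces once filled; one checks it coincides, as a coend class, with the more faithful ``escrow-kept-open'' construction that leaves $M$ and the spare $A$ in the residual.

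Next I would verify that $\rtimes$ is well defined on coend classes. Since $\hat h$ is a cowedge in the residual variable $M$ — the same sliding computation used for the well-definedness of $\diamond$ — it descends to $\lr A C$, and the rest of the construction is manifestly dinatural in $N$, so it respects the equivalence relation generated by the maps $l_u,r_u$ attached to the coend in Equation \eqref{panopticon}. Unitality is then short: taking the unit of the action to be the unit escrow $i$ of \cref{itsa_monoid}, the counit law of $A$ and the unit law of $C$ collapse $\hat i$ to $\eta\circ\epsilon$, the convolution unit of $\clM(A,C)$; sliding the resulting $\eta:I\to C$ out of the residual across the coend and applying the unit axiom of $C$ identifies $i\rtimes\phi$ with $\phi$.

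The main obstacle is the compatibility axiom, $(h\otimes k)\rtimes\phi=h\rtimes(k\rtimes\phi)$. Tracing both sides through the definition, this reduces to showing that closing up intertwines the escrow multiplication of \cref{itsa_monoid} with convolution, $\widehat{h\otimes k}=\hat h * \hat k$, after which associativity of $*$ finishes the job. Proving this intertwining is where the real effort lies: one must reassociate two nested comultiplications of $A$ and two nested multiplications of $C$, push the symmetry past the legs of $h$ and $k$ using its naturality, and appeal to coassociativity of $c$ and associativity of $m$. The delicate point, and the step I expect to dominate the argument, is matching the \emph{order} in which the two copies of $C$ are multiplied on the two sides; this bookkeeping runs exactly parallel to the associativity check for $\otimes$ and is cleanest in the motivating setting, where the resource object $C$ (money) is a commutative monoid. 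I would carry out this final comparison diagrammatically, in the comb calculus used throughout the paper.
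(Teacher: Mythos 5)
Your construction never uses the hypothesis that $B$ is a $C$-module and an $A$-comodule (the setting of Notation \ref{notat_coact}, which is what the ``Then'' in the statement refers to), and this omission is exactly where the proof breaks. Since your $h\rtimes\phi$ depends on $h$ only through the closure $\hat h\in\clM(A,C)$, what you have actually built is an action of the convolution monoid $(\clM(A,C),*)$ on $\Opt(\cop BB,\cop AC)$ (your unitality and sliding arguments do establish that much), pulled back along the closure map $\lr AC\to\clM(A,C)$. This pullback is an action of the monoid $\lr AC$ of \cref{itsa_monoid} if and only if the closure map is a monoid homomorphism, i.e.\ if and only if your key claim $\widehat{h\otimes k}=\hat h*\hat k$ holds --- and that claim is false in general. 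Writing both sides as four-fold composites, $\hat h*\hat k$ consumes the four copies of $A$ and multiplies the four $C$-outputs in the order $h_1,h_2,k_1,k_2$, whereas $\widehat{h\otimes k}$ does so in the order $h_1,k_1,h_2,k_2$; identifying them requires transposing the two middle factors both under the iterated comultiplication (cocommutativity of $A$) and under the iterated multiplication (commutativity of $C$). Neither is a hypothesis of the theorem, and your own caveat (commutativity of $C$ alone) would not suffice. Concretely: in $\clM=\mathrm{Vect}_k$ take $A=C=B=H$ a Hopf algebra that is neither commutative nor cocommutative (e.g.\ Sweedler's), with regular action and coaction on $B=H$, and take escrows with trivial residual $M=N=I$, so that $h_1,h_2,k_1,k_2$ are plain endomorphisms of $H$. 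Then $\widehat{h\otimes k}=(h_1*k_1)*(h_2*k_2)$ while $\hat h*\hat k=h_1*h_2*k_1*k_2$; choosing $h_1=k_2=\eta\epsilon$ this is $k_1*h_2$ versus $h_2*k_1$, which differ because the convolution algebra $\mathrm{End}(H)\cong H^*\otimes H$ is noncommutative. Filling both optics with $\mathrm{id}_B$ (the map $\bar\tau$ of the paper, well defined on coend classes) and using $\phi=(\mathrm{id},\mathrm{id})$ returns $\widehat{h\otimes k}*\mathrm{id}_H$ versus $\hat h*\hat k*\mathrm{id}_H$, distinct since $\mathrm{id}_H$ is convolution-invertible via the antipode; hence $(h\otimes k)\rtimes\phi$ and $h\rtimes(k\rtimes\phi)$ are genuinely different coend classes.

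The paper's proof avoids this by never closing the escrow up. There, $h\rtimes o$ is defined by comultiplying the incoming $A$ and feeding one copy to $h_1$ and one to $o_1$, but the money output of $h_1$ is absorbed into $B$ by the action $a:C\otimes B\to B$, and the copy of $A$ that $h_2$ needs is regenerated inside the second leg by the coaction $c:B\to B\otimes A$, the two $C$-outputs being multiplied only at the very end. With this routing, both sides of the action axiom consume the copies of $A$, and produce the factors of $C$, in the same left-to-right order, so $(k\otimes h)\rtimes o=k\rtimes(h\rtimes o)$ reduces to the module axiom $a\circ(m\otimes B)=a\circ(C\otimes a)$, its comodule dual, and co/associativity --- no commutativity anywhere; unitality likewise reduces to the co/unit laws of the co/monoid and the co/action. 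The lesson is that the fix is not bookkeeping: to get an action of $\lr AC$ itself you must let $B$'s action and coaction carry the intermediate $C$ and $A$, which is precisely the paper's construction.
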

\begin{proof}
  Suppose given an escrow and an optic, as the following pairs:
  \[
    \left(
    \begin{tikzpicture}[baseline=(current bounding box.center)]
        \arTwo[magenta!10]{h_1}\gau{$A$}\down{\dro{$C$}}
      \end{tikzpicture}\, , \,
    \begin{tikzpicture}[baseline=(current bounding box.center)]
        \twoAr[blue!10]{h_2}\dro{$C$}\down{\gau{$A$}}
      \end{tikzpicture}
    \right) \qquad
    \left(
    \begin{tikzpicture}[baseline=(current bounding box.center)]
        \arTwo[teal!10]{o_1}\gau{$A$}\down{\dro{$B$}}
      \end{tikzpicture}\, , \,
    \begin{tikzpicture}[baseline=(current bounding box.center)]
        \twoAr[yellow!10]{o_2}\dro{$C$}\down{\gau{$B$}}
      \end{tikzpicture}
    \right)
  \]
  Now define the action as follows:
  \[
    \begin{tikzpicture}
      \yScale{\comult}
      \step{\up[4]{\arTwo[magenta!10]{h_1}}\arTwo[teal!10]{o_1}}
      \step[2]{\up[4]{\xScale[3]\Uid}\down{\yScale{\act{$a$}}}}
      \step[4]{
        \down{\yScale{\coact{$c$}}}
        \step{\up[4]{\twoAr[blue!10]{h_2}}\twoAr[yellow!10]{o_2}}
        \step[2]{\yScale{\mult}}
      }
      \draw[wire] (2,3*\nlen) -| ++(.15,3*\nlen) -| ++(2.75,-.75) |- ++(.15,0);
    \end{tikzpicture}
  \]
  using the monoid structure of $C$, comonoid structure of $A$, module and comodule structure of $B$. We shall now prove the axioms of an action:
  \begin{enumerate}
    \item \label{a_1} \emph{unitality} axiom: the fact that $i\rtimes o=o$ for every optic $o=(o_1,o_2)$ and for the unit escrow $i\in\lr A C $.
    \item \label{a_2} the \emph{action} axiom, saying that $(k\otimes h) \rtimes o = k  \rtimes (h \rtimes o)$ for escrows $h,k$ and an optic $o$.
  \end{enumerate}
  Axiom \ref{a_1} follows from a simple string diagram deformation, using the unit axiom of the co/monoid and co/action structures:
  \[
    \begin{tikzpicture}[xscale=.75]
      \step{\up[3]\akasa}
      \yScale{\comult}
      \step{\up[4]{\counit\down[2]\unit}\arTwo[teal!10]{o_1}}
      \step[2]{\down[.5]{\yScale[1.5]{\act{$a$}}}\up[2]\turn}
      \step[4]{
        \step{\up[3]\akasa}
        \down[.5]{\yScale[1.5]{\coact{$c$}}\up[2.5]\coturn}
        \step{\up[4]{\unit\down[2]\counit}\twoAr[yellow!10]{o_2}}
        \step[2]{\yScale{\mult}}
      }
      \step[3]{\up[3]\idn}
      \begin{scope}[xshift=7.5cm]
        \yScale{\comult}
        \step{\up[4]{\counit}\arTwo[teal!10]{o_1}}
        \step[2]{
          \step{\up[4]{\unit}\twoAr[yellow!10]{o_2}}
          \step[2]{\yScale{\mult}}
        }
        \up{\step[2]\idn}
      \end{scope}
      \begin{scope}[xshift=13cm,yshift=.5cm]
        \arTwo[teal!10]{o_1}
        \step{\up\idn}
        \step[2]{\twoAr[yellow!10]{o_2}}
      \end{scope}
    \end{tikzpicture}
  \]
  Similarly, axiom \ref{a_2} follows from the co/associativity of the co/action and co/multiplication applied to the following equality of wire diagrams, where on the left we have $k\rtimes (h\rtimes o)$ (slightly simplifying its structure to avoid cognitive overload), and on the right $(k\otimes h)\rtimes o$:
  \[
    \begin{tikzpicture}[xscale=.75]
      \step[-1.5]{\down[4.75]{\yScale[2.25]\comult}}
      \step[-.5]{\down[5.5]{\xScale[.5]{\yScale[1.5]\comult}}\xScale[.5]\idn}
      \arTwo[orange!10]{k_1}
      \down[3]{\arTwo[magenta!10]{h_1}}
      \down[6]{\arTwo[teal!10]{o_1}}
      \step{\up{\xScale\idn}}
      \step{\down[2]{\xScale\idn}}
      \step{\down[5]{\xScale\idn}}
      \step[3]{
        \twoAr[cyan!10]{k_2}
        \down[3]{\twoAr[blue!10]{h_2}}
        \down[6]{\twoAr[yellow!10]{o_2}}
        \step{\down[5.5]{\xScale[.5]{\yScale[1.5]\mult}}\xScale[.5]\idn}
        \step[1.5]{\down[4.75]{\yScale[2.25]\mult}}
      }
      \draw[wire] (1,-1.25) -| ++(\nlen,3*\nlen) -- ++(-\nlen,0);
      \draw[wire] (1.25,-1) -| ++(.5,5*\nlen) -- ++(-3*\nlen,0);
      \draw[wire] (3,-1.25) -| ++(-\nlen,3*\nlen) -- ++(\nlen,0);
      \draw[wire] (2.75,-1) -| ++(-.5,5*\nlen) -- ++(3*\nlen,0);
    \end{tikzpicture}
    \quad
    \begin{tikzpicture}[xscale=.75]
      \step[-1.5]{\down[6.25]{\yScale[2.25]\comult}}
      \step[-.5]{\down[2.5]{\xScale[.5]{\yScale[1.5]\comult}}\down[6]{\xScale[.5]\idn}}
      \arTwo[orange!10]{k_1}
      \down[3]{\arTwo[magenta!10]{h_1}}
      \down[6]{\arTwo[teal!10]{o_1}}
      \step{\up{\xScale\idn}}
      \step{\down[2]{\xScale\idn}}
      \step{\down[5]{\xScale\idn}}
      \step[3]{
        \twoAr[cyan!10]{k_2}
        \down[3]{\twoAr[blue!10]{h_2}}
        \down[6]{\twoAr[yellow!10]{o_2}}
        \step{\down[2.5]{\xScale[.5]{\yScale[1.5]\mult}}\down[6]{\xScale[.5]\idn}}
        \step[1.5]{\down[6.25]{\yScale[2.25]\mult}}
      }
      \up[3]{\draw[wire] (1,-1.25) -| ++(\nlen,3*\nlen) -- ++(-\nlen,0);}
      \down{\draw[wire] (1,-1) -| ++(.5,4*\nlen) -- ++(-\nlen,0);}
      \up[3]{\draw[wire] (3,-1.25) -| ++(-\nlen,3*\nlen) -- ++(\nlen,0);}
      \down{\draw[wire] (3,-1) -| ++(-.5,4*\nlen) -- ++(\nlen,0);}
    \end{tikzpicture}
  \]
  This concludes the proof.
\end{proof}
Unfortunately, one can readily notice that \cref{escaction}, albeit interesting, does not provide the notion of composition we are looking for. Roughly speaking, the most evident problem here is that our action does not even have the right type signature: $B$ is present on both sides in the inside of the resulting comb so that we do not even have a place to plug in our witnesses now, that have signature $C \to B$ and $B \to A$! To define a meaningful composition representing $B$ acting as an intermediary, we then need to resort to the underlying structure of Tambara modules that we described in \cref{tambara}.
\begin{assumption}
The blanket assumption all along the following construction remains that the object $B\in\clM$ is simultaneously a module for the monoid $C$ and a comodule for the comonoid $A$. Under this assumption, we can model the situation where $B$ acts as an intermediary in a transaction between $A$ and $C$.
\end{assumption}
\begin{definition}[Vermittler optics]
  (From German, \emph{Vermittler} = intermediary) Given $A,C$ a comonoid and a monoid in $\clM$, and $B$ an object which is at the same time a $C$-module and an $A$-comodule, a \emph{Vermittler optic} over $B$ is an element of the hom-set $\Opt(\cop CA, \cop{A\otimes B}{C\otimes B})$.
\end{definition}
We denote $\clV(B;A,C)$ the set of Vermittler optics over $B$, making the dependence on the comonoid $A$ and the monoid $C$ explicit.
\begin{definition}[$\lr CA$ `acts' on Vermittler optics]\label{Vermittler}
  Given an escrow $h$ in $\lr CA$,
  \[\left(
    \begin{tikzpicture}[baseline=(current bounding box.center)]
        \arTwo[magenta!10]{h_1}\gau{$C$}\down{\dro{$A$}}
      \end{tikzpicture}\, , \,
    \begin{tikzpicture}[baseline=(current bounding box.center)]
        \twoAr[blue!10]{h_2}\dro{$A$}\down{\gau{$C$}}
      \end{tikzpicture}
    \right)\]
  and an optic $f : \cop CA \to \cop B B $, we define a \emph{Vermittler map}
  \[\xymatrix@R=0cm{\lhd : \lr CA \times  \Opt(\cop CA, \cop BB) \ar[r] & \clV(B;A,C) \\
    (h,f)\ar@{|->}[r] & h \lhd f}\]
  between the product of the hom-set of escrows $A\to C$ and the hom-set of the Vermittler optics for $B$.
\end{definition}
\begin{theorem}
  The map in definition \ref{Vermittler} is partial, and moreover satisfies `action-like' properties:
  \begin{itemize}
    \item if $\epsilon$ is the counit of the comonoid $A$, and $\eta$ the unit of the monoid $C$, the equation
    \[\label{questa}\epsilon\circ (k\lhd f)\circ \eta  = f\]
    holds.
    \item $\lhd$ is compatible with the action and coaction maps in the sense that the square
    \[\label{laltra}\xymatrix{
      \ar[r]^{k\lhd f}& \ar[d]^{A\otimes c}\\
      \ar[u]^{C\otimes a}\ar[r]_{(n\circ f\circ m)\otimes B}&
      }\]
      is commutative, where $m : C\otimes C \to C$ is the monoid multiplication, $n : A \to A\otimes A$ the comonoid comultiplication, and $c,a$ respectively the coaction and action of $A,C$ on $B$.
    \end{itemize}
  \end{theorem}
    \begin{proof}
  Recall from \cite[4.13]{2001.07488} that the hom-set $\Opt(\cop AB, \cop ST)$ can be expressed as the end
  \[\int_P \Set(P(A,B), P(S,T))\label{dis_tamba}\]
  where $P : \clM^\op\times \clM \to \Set$ is a profunctor, and the category of integration is that  of Tambara modules \cite[4.1]{2001.07488}. Now,
  \begin{itemize}
    \item the fact that $P$ is a Tambara module yields maps
          \[\xymatrix{\alpha_{[B]}^{AC} : P(C,A) \ar[r] & P(C\otimes B, A\otimes B)}\]
          that form a wedge in $B$; post-composition with these $\alpha_{[B]}^{AC}$ and the universal wedge $\pi^\bullet$ defining Equation \eqref{dis_tamba} yields maps
          \begin{adju}
            \xymatrix{
            \lr CA \ar@{=}[r]&\int_Q \Set(Q(A,C), Q(C,A))\ar[dr]_{\tilde\alpha_P} \ar[r]^{\pi^P} & \Set(P(A,C), P(C,A)) \ar[d]^{\alpha_{[B],*}^{AC}}\\
            && \Set(P(A,C), P(C\otimes B, A\otimes B))
            }
          \end{adju}
          forming a wedge in $P$. This in turn yields a unique map
          \[\xymatrix{\tilde\alpha : \lr CA \ar[r] & \clV(B;A,C)}\]
          by the universal property of the end that defines $\clV(B;A,C)$.
    \item The assumption that $B$ is at the same time a module for the monoid $C$ and a comodule for the comonoid $A$ entails the existence of an action map $a : C\otimes B \to B$ and a coaction map $c : B \to A\otimes B$, which plugged into the profunctor $P$ yield
          \[\xymatrix{\beta_{[B]}^{AC} = P(a,c) : P(B,B) \ar[r] & P(C\otimes B, A\otimes B).}\]
          In a similar fashion, post-composition with $\beta_{[B]}^{AC}$, and the universal property of the end defining domain and codomain of $\tilde\beta$ define such a map as
          \[\tilde\beta : \xymatrix{\Opt(\cop CA, \cop BB)\ar[r] & \clV(B;A,C)}\]
          so that we obtain a cospan of maps
          \[\xymatrix{
              & \lr CA\ar[d]^{\tilde\alpha}\\
              \Opt(\cop CA, \cop BB)\ar[r]_{\tilde\beta} & \clV(B;A,C),
            }\]
          and we can consider its pullback $Z$ (in $\Set$, or inside a sufficiently cocomplete ambient category). The diagonal in the pullback is the candidate partial function
          \[
            \xymatrix{
              &Z \ar[dr]\ar[dl]& \\
              \lr CA\times \Opt(\cop CA, \cop BB) \ar@{.>}[rr]_-\lhd && \clV(B;A,C)
            }
          \]
          having $Z$ as its domain of definition. As such, $k\lhd f$ is defined as soon as $\tilde\alpha(k)=\tilde\beta(f)$, and it equals any of these two elements.
  \end{itemize}
  The main intuition behind this construction is the following: the object $Z$ is the maximal subobject of the product $\lr CA\times \Opt(\cop CA, \cop BB)$ with the property that the conjoint action and coaction over $B$, and all actions given by the Tambara module structure maps $\alpha^B$, agree. This yields a unique partial map $\firstblank \lhd \firstblank$
  that defines a \emph{partial action} of the monoid $\lr AC$ on the set of Vermittler optics over $B$.

  It only remains to show that this map satisfies the commutativities of \eqref{questa} and \eqref{laltra} when the domain of $\lhd$ has been suitably defined as above; it is certainly possible, but rather unwieldy, to argue equationally, with the definition and the universal properties of $Z$ and the hom-sets of optics. Instead, the following remark will give a graphical interpretation of the above partial action that makes the action axioms almost tautological. This will conclude the proof.
\end{proof}
\begin{remark}
  We can give a concise graphical representation of the action of the maps $\tilde\alpha,\tilde\beta$ above:
  \begin{itemize}
    \item the map $\tilde\alpha$ acts sending an escrow
          \[\left(
            \begin{tikzpicture}[baseline=(current bounding box.center)]
                \arTwo[magenta!10]{h_1}\gau{$C$}\down{\dro{$A$}}
              \end{tikzpicture}\, , \,
            \begin{tikzpicture}[baseline=(current bounding box.center)]
                \twoAr[blue!10]{h_2}\dro{$A$}\down{\gau{$C$}}
              \end{tikzpicture}
            \right)\]
          to the `trivial' Vermittler optic obtained by tensoring with the identity arrow $1_B$:
          \[\alpha(h) =\quad
            \left(
            \begin{tikzpicture}[baseline=(current bounding box.center)]
                \arTwo[magenta!10]{h_1}\gau{$C$}\down{\dro{$A$}}
                \down[3]{\idn\dro{$B$}\gau{$B$}}
              \end{tikzpicture}\, , \,
            \begin{tikzpicture}[baseline=(current bounding box.center)]
                \twoAr[blue!10]{h_2}\dro{$A$}\down{\gau{$C$}}
                \down[3]{\idn\dro{$B$}\gau{$B$}}
              \end{tikzpicture}
            \right)\]
    \item the map $\tilde\beta$ acts sending an optic $\cop CA \to \cop BB$ to the Vermittler optic obtained by plugging in the action and coaction map of $A,C$ over $B$:
          \[\tilde\beta(o) =\quad
            \left(
            \begin{tikzpicture}[xscale=.75,baseline=(current bounding box.center)]
                \act{$a$}\down{\gau{$B$}}\up{\gau{$C$}}
                \step{\arTwo[green!10]{o_1}\down{\dro{$A$}}}
              \end{tikzpicture}\, , \,
            \begin{tikzpicture}[xscale=.75,baseline=(current bounding box.center)]
                \twoAr[yellow!10]{o_2}\down{\gau{$C$}}
                \step{\coact{$c$}\up{\dro{$A$}}\down{\dro{$B$}}}
              \end{tikzpicture}
            \right)\]
  \end{itemize}
\end{remark}
The map $\lhd$ operates on the subset of the product $\lr AC \times  \Opt(\cop AC, \cop BB)$ where these two actions coincide. Intuitively, this represents the fact that the escrow mediated by $B$ (represented by $\tilde\beta(o)$) has the same effect of a standard escrow between $A$ and $C$ where $B$ just stays idle (represented by $\alpha(h)$), which signifies that, when the mediated escrow is concluded, $B$ did not receive any goods or money from any of the two parties.

To conclude, one can readily verify the `action' axioms using graphical calculus, namely the fact that representing the Vermittler optic $f$ as a pair $(f_1,f_2)$ we can follow the chain of equalities
\begin{align*}
  \begin{tikzpicture}[baseline=(current bounding box.center)]
    \arTwo[green!10]{f_1}
    \step{\up\idn}
    \step[2]{\twoAr[yellow!10]{f_2}}
    \down[3]{\act{$a$}\step\idn\step[2]{\coact{$c$}}}
  \end{tikzpicture} \kern.75em &= \kern.75em
  \begin{tikzpicture}[xscale=.75,baseline=(current bounding box.center)]
    \act{$a$}\up[2]\idn
    \up{\step{\act{$a$}}}
    \step[2]{\up{\arTwo[green!10]{f_1}}}
    \step[3]{\up[2]\idn}
    \step[4]{
      \up{\twoAr[yellow!10]{f_2}\step{\coact{$c$}}}
      \step[2]{\up[2]\idn\coact{$c$}}
    }
  \end{tikzpicture}\\[1em] &= \kern.75em
  \begin{tikzpicture}[xscale=.75,baseline=(current bounding box.center)]
    \idn\up[2]{\act{$a$}}
    \up{\step{\act{$a$}}}
    \step[2]{\up{\arTwo[green!10]{f_1}}}
    \step[3]{\up[2]\idn}
    \step[4]{
      \up{\twoAr[yellow!10]{f_2}\step{\coact{$c$}}}
      \step[2]{\idn\up[2]{\coact{$c$}}}
    }
  \end{tikzpicture}\\[1em] &= \kern.75em
  \begin{tikzpicture}[xscale=.75,baseline=(current bounding box.center)]
    \mult
    \step{\arTwo[green!10]{f_1}}
    \step[2]{\up\idn}
    \step[3]{
      \twoAr[yellow!10]{f_2}\step{\comult}
    }
    \down[3]{\idn\step\idn\step[2]\idn\step[3]\idn\step[4]\idn}
  \end{tikzpicture}
\end{align*}
This established \eqref{laltra}. Finally, that plugging the counit and unit of $A$ and $C$ at the sides of the optic $f$, we get back the original optic:
\[  \begin{tikzpicture}[xscale=.75,baseline=(current bounding box.center)]
  \step[-1]{\up\unit}
  \mult
  \step{\arTwo[green!10]{f_1}}
  \step[2]{\up\idn}
  \step[3]{
    \twoAr[yellow!10]{f_2}\step{\comult}
  }
  \step[5]{\up\counit}
\end{tikzpicture} \kern1em = \kern1em
\begin{tikzpicture}[baseline=(current bounding box.center)]
  \arTwo[green!10]{f_1}
  \step{\up\idn}
  \step[2]{
    \twoAr[yellow!10]{f_2}
  }
\end{tikzpicture}\]
This establishes \eqref{questa}.

\section{Conclusion and future work}
In this work, we defined a formalism for escrow trades based on optics. We moreover investigated various ways of chaining escrows together, which can be considered more or less convenient depending on externalities such as the tax regime the participating agents are in.

As a direction of future work, we would like to obtain an alternative formalization of the same ideas relying on new techniques such as the ones presented in \cite{nester2020structure}, that allow to model processes (such as opening an escrow vault) and passing of resources as separated interactive layers.
\putbib{bib/allofthem}{plainurl}

\end{document}